\documentclass{amsart}
\usepackage{amssymb,latexsym,enumitem,graphics}
%\pdfoutput=1

\theoremstyle{cute}
\newtheorem{thm}{Theorem}[section]

\newtheorem{lemma}[thm]{Lemma}
\newtheorem{prop}[thm]{Proposition}

\theoremstyle{definition}

\newtheorem{ex}[thm]{Example}

\theoremstyle{remark}

\numberwithin{equation}{section}

\newcommand{\D}{\mathbb{D}}

\begin{document}
\title[
 Weighted Composition--Differentiation Operators]{ Weighted Composition--Differentiation Operator on the Hardy and Weighted Bergman Spaces}
\author[M. Fatehi]{Mahsa Fatehi}

\date{August 12, 2021}

\address{Department of Mathematics\\ Shiraz Branch, Islamic Azad University\\
Shiraz, Iran}
\email{fatehimahsa@yahoo.com}

\

\begin{abstract}
In this paper, we consider the sum of weighted composition operator $C_{\psi_{0},\varphi_{0}}$ and the weighted composition--differentiation operator $D_{\psi_{n},\varphi_{n},n}$  on the Hardy and weighted Bergman spaces. We describe the spectrum of a compact operator $C_{\psi_{0},\varphi_{0}}+D_{\psi_{n},\varphi_{n},n}$ when  the fixed point $w$ of $\varphi_{0}$ and $\varphi_{n}$ is inside the open unit disk and $\psi_{n}$ has a zero at $w$ of order at least $n$. Also the lower estimate and the upper estimate on the norm of a weighted composition--differentiation operator on the Hardy space  $H^{2}$ are obtained. Furthermore,  we determine the norm of a  composition--differentiation operator $D_{\varphi,n}$, acting on the Hardy space $H^{2}$, in the case where $\varphi(z)=bz$ for some complex number $b$ that $|b|<1$.
\end{abstract}
\subjclass[2010]{47B38 (Primary), 30H10, 30H20, 47B33, 47E99}
\keywords{Weighted composition operator, Differentiation operator, Spectrum, Norm, Hardy space, Weighted Bergman space.}

\maketitle
\thispagestyle{empty}

\section{Preliminaries}
Let $\mathbb{D}$ be the open unit disk in the complex plane.
The
\textit{Hardy space} $H^{2}$ is the set of all analytic functions $f$ on $\D$ such that
$$
\|f\|_=\left(\sup_{0<r<1}\frac{1}{2\pi}
\int_{0}^{2\pi}\bigl|f\bigl(re^{i\theta}\bigr)\bigr|^{2}d\theta\right)^{1/2}
<\infty\text{.}
$$
For
$-1<\alpha<\infty$, the \textit{weighted Bergman space}
$A^{2}_{\alpha}$  is the space of all
analytic functions $f$ on $\mathbb{D}$ so that
$$\|f\|=\left(\int_{\D}\left|f(z)\right|^{2}(\alpha+1)(1-|z|^{2})^{\alpha}dA(z)\right)^{1/2}<\infty\text{,}$$
where $dA$ is the normalized area measure on $\mathbb{D}$.
The case when $\alpha = 0$, usually denoted $A^{2}$, is called the (unweighted) Bergman space.
Throughout this paper, we will write $\mathcal{H}_{\alpha}$ to denote the Hardy space $H^{2}$ for $\alpha=-1$ or the weighted Bergman space $A_{\alpha}^{2}$ for $\alpha>-1$.\par
The weighted Bergman spaces and the Hardy space are \textit{reproducing kernel} Hilbert spaces. For every $w \in \mathbb{D}$ and each non-negative integer $n $, let $K_{w,\alpha}^{[n]}$ denote the unique function in $\mathcal{H}_{\alpha}$ that $ \langle f, K_{w,\alpha}^{[n]}\rangle=f^{(n)}(w)$ for each $f \in \mathcal{H}_{\alpha}$, where $f^{(n)}$ is the $n$th derivative of $f$ (note that $f^{(0)}=f$); for convenience, we
use the notation $K_{w,\alpha}$ when $n=0$. The function $K_{w,\alpha}^{[n]}$ is called   the
\textit{reproducing kernel function}. The
reproducing kernel functions for evaluation at $w$ are given by $K_{w,\alpha}(z)=1/(1-\overline{w}z)^{\alpha+2}$ and
\begin{eqnarray*}
K_{w,\alpha}^{[n]}(z)= \frac{(\alpha+2)...(\alpha+n+1) z^n}{(1-\overline{w}z)^{n+\alpha+2}}
\end{eqnarray*}
for $z,w\in \mathbb{D}$  and  $n> 1$. \par

For an operator $T$ on $\mathcal{H}_{\alpha}$, we write $\|T\|_{\alpha}$ to denote the norm of $T$ acting on $\mathcal{H}_{\alpha}$. Through this paper, the spectrum  of  $T$ and the point spectrum of  $T$ and the  spectral radius of $T$ are denoted by $\sigma_{\alpha}(T)$, $\sigma_{p,\alpha}(T)$, and $r_{\alpha}(T)$, respectively.\par
We write $H^{\infty}$ to denote the space of all bounded analytic functions on $\D$, with $\|f\|_{\infty}=\sup\{|f(z)|:z\in\D\}$.\par

We say that an operator $T$ on a Hilbert space $H$ is \textit{hyponormal} if $T^{\ast}T-TT^{\ast} \geq 0$, or equivalently if $\|T^{\ast}f\| \leq \|Tf\|$ for all $f \in H$. Moreover,  the operator $T$ is said to be \textit{ cohyponormal} if $T^{\ast}$ is hyponormal.
% We say a bounded operator $T$ has a normal point spectrum  whenever
%$\lambda$ belongs to the point spectrum of $T$ if and only if $\overline{\lambda}$  belongs to the point spectrum of $T^{\ast}$. This terminology was inspired by the property of the eigenvalues of a normal operator.   Many operators are known with normal point spectrums such as
 %hyponormal operators, cohyponormal operators, and complex symmetric operators (see \cite[Proposition 4.4, p. 47]{c4} and  \cite[Proposition 1]{gp}).\par
Let $P$ denote the
projection of $L^{2}(\partial \mathbb{D})$ onto $H^{2}$. For each $b \in {L^{2}(\partial \mathbb{D})}$, we define
the \textit{Toeplitz  operator} $T_{b}$ on $H^{2}$ by
$T_{b}(f)=P(bf)$.
 For $\varphi$ an analytic self-map of $\mathbb{D}$, let $C_{\varphi}$ be the \textit{composition operator}
 such that $C_{\varphi}(f) = f \circ \varphi$ for any $f \in \mathcal{H}_{\alpha}$. The composition operator $C_{\varphi}$  acts boundedly for every $\varphi$, with
\[
\left(\frac{1}{1-|\varphi(0)|^{2}}\right)^{(\alpha+2)/2}\leq\|C_{\varphi}\|_{\alpha}\leq\left(\frac{1+|\varphi(0)|}{1-|\varphi(0)|}\right)^{(\alpha+2)/2}\text{.}
\]
(See \cite[Corollary 3.7]{cm} and \cite[Lemma 2.3]{richman}.) Let $\psi$ be an
 analytic function on $\mathbb{D}$ and $\varphi$ be an analytic self-map of $\mathbb{D}$. The \textit{weighted
composition operator} $C_{\psi , \varphi}$ is defined by $C_{\psi ,
\varphi}(f)=\psi \cdot (f\circ \varphi)$ for  $f \in \mathcal{H}_{\alpha}$.\par

Although for each positive integer $n$, the \textit{differentiation operator} $D_{n}(f)=f^{(n)}$ is unbounded on $ \mathcal{H}_{\alpha}$ (note that $\lim_{m \rightarrow \infty}\|D_{n}(z^{m})\|/\|z^{m}\|=\infty$), there are some analytic maps $\varphi:\mathbb{D}\rightarrow \mathbb{D}$ such that the operator $C_{\varphi}D_{n}$ is bounded. The bounded and compact  operators $C_{\varphi}D_{n}$  on $\mathcal{H}_{\alpha}$ were determined in \cite{hp}, \cite{moradi}, \cite{ohno} and \cite{s4}. Recently the author and Hammond \cite{fh}  obtained the adjoint, norm, and spectrum of some operators $C_{\varphi}D_{1}$ on the Hardy space.  For an analytic self-map $\varphi$ of $\mathbb{D}$ and a positive integer $n$, the \textit{composition--differentiation operator} on $\mathcal{H}_{\alpha}$ is defined by the rule $D_{\varphi,n}(f)=f^{(n)}\circ \varphi$; for convenience, we
use the notation $D_{\varphi}$ when $n=1$. For  an analytic function $\psi$   on  $\mathbb{D}$, the \textit{weighted composition--differentiation operator} $D_{\psi,\varphi,n}$ on $\mathcal{H}_{\alpha}$ is defined
$$D_{\psi,\varphi,n}f(z)=\psi(z)f^{(n)}(\varphi(z))\text{.}$$
Some properties of  weighted composition--differentiation operators were considered in \cite{s2} and \cite{hw}.\par

In this paper, we determine  the spectrum of a compact operator $C_{\psi_{0},\varphi_{0}}+D_{\psi_{n},\varphi_{n},n}$ when  the fixed point $w$ of $\varphi_{0}$ and $\varphi_{n}$ is inside the open unit disk and the function $\psi_{n}$ has a zero at $w$ of order at least $n$ (Theorem \ref{4theorem}).  The spectral radius of a class of compact weighted composition--differentiation operators is obtained (Theorem \ref{20theorem}). In addition, we find the lower estimate and the upper estimate for $\|D_{\psi,\varphi,n}\|_{-1}$ (Propositions \ref{5theorem} and \ref{8prop}).  Moreover, the norm of a  composition--differentiation operator $D_{\varphi,n}$, acting on the Hardy space $H^{2}$, is determined in the case where $\varphi(z)=bz$ for some complex number $b$ that $|b|<1$ (Theorem \ref{7theorem}).

\section{Spectral Properties}

To find the spectrum of $C_{\psi_{0},\varphi_{0}}+D_{\psi_{n},\varphi_{n},n}$ we need to obtain an invariant subspace of $\big(C_{\psi_{0},\varphi_{0}}+D_{\psi_{n},\varphi_{n},n}\big)^{\ast}$. To do this, we consider the action of the adjoint of the operator $C_{\psi_{0},\varphi_{0}}+D_{\psi_{n},\varphi_{n},n}$ on the reproducing kernel functions.

\begin{lemma}\label{lemma2}
Let $m$ be a non-negative integer. Suppose that $C_{\psi_{0},\varphi_{0}}$ and $D_{\psi_{n},\varphi_{n},n}$ are bounded operators on  $\mathcal{H}_{\alpha}$ and the fixed point $w$ of $\varphi_{0}$ and $\varphi_{n}$ is inside the open unit disk. Assume that the function $\psi_{n}$ has a zero at $w$ of order at least $n$.
\begin{enumerate}[label={(\roman*)}]
\item If $m> n$, then
\begin{align*}
\big(C_{\psi_{0},\varphi_{0}}+ D_{\psi_{n},\varphi_{n},n}\big)^{\ast}K_{w,\alpha}^{[m]}&=\sum_{j=0}^{m-1}\overline{\alpha_{j}(w)}K_{w,\alpha}^{[j]}+
\sum_{i=n}^{m-1}
\overline{\beta_{i-n}(w)}K_{w,\alpha}^{[i]}\\
&+\bigg(\overline{\psi_{0}(w)\big(\varphi^{\prime}_{0}(w)\big)^{m}}+\binom{m}{n}\overline{\psi_{n}^{(n)}(w)
\big(\varphi^{\prime}_{n}(w)\big)
^{m-n}}\bigg)K_{w,\alpha}^{[m]}\text{;}
 \end{align*}
 \item if $m= n$, then
 \begin{align*}
 \big(C_{\psi_{0},\varphi_{0}}+ D_{\psi_{n},\varphi_{n},n}\big)^{\ast}K_{w,\alpha}^{[m]}=\sum_{j=0}^{n-1}\overline{\alpha_{j}(w)}K_{w,\alpha}^{[j]}
 +\bigg(\overline{\psi_{0}(w)\big(\varphi^{\prime}_{0}(w)\big)^{n}}+\overline{\psi_{n}^{(n)}(w)}\bigg)K_{w,\alpha}^{[n]}\text{;}
 \end{align*}
 \item if $m<n$, then
 \begin{align*}
 \big(C_{\psi_{0},\varphi_{0}}+ D_{\psi_{n},\varphi_{n},n}\big)^{\ast}K_{w,\alpha}^{[m]}=\sum_{j=0}^{m-1}\overline{\alpha_{j}(w)}K_{w,\alpha}^{[j]}
 +\overline{\psi_{0}(w)\big(\varphi^{\prime}_{0}(w)\big)^{m}}K_{w,\alpha}^{[m]}\text{,}
 \end{align*}
 \end{enumerate}
where the functions $\alpha_{j}$'s and $\beta_{j}$'s  consist of some products of the derivatives of $\psi_{0}$ and $\varphi_{0}$ and some products of the derivatives of $\psi_{n}$ and $\varphi_{n}$, respectively.
\end{lemma}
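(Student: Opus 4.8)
The plan is to use the defining property of the reproducing kernels: for any bounded operator $T$ on $\mathcal{H}_{\alpha}$ and any $f \in \mathcal{H}_{\alpha}$, we have $\langle f, T^{\ast}K_{w,\alpha}^{[m]}\rangle = \langle Tf, K_{w,\alpha}^{[m]}\rangle = (Tf)^{(m)}(w)$. Thus it suffices to compute $(Tf)^{(m)}(w)$ for $T=C_{\psi_{0},\varphi_{0}}$ and $T=D_{\psi_{n},\varphi_{n},n}$ separately, write the result as a linear combination $\sum_{j}c_{j}f^{(j)}(w)=\sum_{j}c_{j}\langle f,K_{w,\alpha}^{[j]}\rangle$, and then read off $T^{\ast}K_{w,\alpha}^{[m]}=\sum_{j}\overline{c_{j}}K_{w,\alpha}^{[j]}$. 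Summing the two computations and conjugating will produce the three displayed formulas.

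First I would treat $C_{\psi_{0},\varphi_{0}}$. Since $(C_{\psi_{0},\varphi_{0}}f)(z)=\psi_{0}(z)f(\varphi_{0}(z))$, the general Leibniz rule gives $(C_{\psi_{0},\varphi_{0}}f)^{(m)}(z)=\sum_{k=0}^{m}\binom{m}{k}\psi_{0}^{(m-k)}(z)\,\frac{d^{k}}{dz^{k}}\big[f(\varphi_{0}(z))\big]$, and Fa\`a di Bruno's formula expands each $\frac{d^{k}}{dz^{k}}\big[f(\varphi_{0}(z))\big]$ as a sum of terms $f^{(j)}(\varphi_{0}(z))$ times products of derivatives of $\varphi_{0}$, for $0\le j\le k$. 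Evaluating at $z=w$ and using $\varphi_{0}(w)=w$ converts every $f^{(j)}(\varphi_{0}(w))$ into $f^{(j)}(w)$. The top-order term, arising only when $k=m$ and $j=m$, carries the coefficient $\psi_{0}(w)\big(\varphi_{0}'(w)\big)^{m}$; all remaining contributions involve $f^{(j)}(w)$ with $j<m$ and coefficients that are products of derivatives of $\psi_{0}$ and $\varphi_{0}$, which I collect into the functions $\alpha_{j}(w)$.

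Next I would handle $D_{\psi_{n},\varphi_{n},n}$, where the hypothesis on $\psi_{n}$ does the essential work. Writing $(D_{\psi_{n},\varphi_{n},n}f)(z)=\psi_{n}(z)f^{(n)}(\varphi_{n}(z))$ and applying the Leibniz rule yields $\sum_{k=0}^{m}\binom{m}{k}\psi_{n}^{(m-k)}(z)\,\frac{d^{k}}{dz^{k}}\big[f^{(n)}(\varphi_{n}(z))\big]$. Because $\psi_{n}$ vanishes at $w$ to order at least $n$, we have $\psi_{n}^{(l)}(w)=0$ for $0\le l\le n-1$, so at $z=w$ only the terms with $m-k\ge n$, that is $k\le m-n$, survive. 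This immediately splits into the three cases: if $m<n$ every index is excluded and $(D_{\psi_{n},\varphi_{n},n}f)^{(m)}(w)=0$; if $m=n$ only $k=0$ remains, producing the single term $\psi_{n}^{(n)}(w)f^{(n)}(w)$; and if $m>n$ the surviving range $0\le k\le m-n$, combined with Fa\`a di Bruno applied to $f^{(n)}(\varphi_{n}(z))$ and $\varphi_{n}(w)=w$, gives a combination of $f^{(i)}(w)$ for $n\le i\le m$. The top term comes from $k=m-n$ together with the leading Fa\`a di Bruno term, contributing $\binom{m}{m-n}\psi_{n}^{(n)}(w)\big(\varphi_{n}'(w)\big)^{m-n}=\binom{m}{n}\psi_{n}^{(n)}(w)\big(\varphi_{n}'(w)\big)^{m-n}$ on $f^{(m)}(w)$, while the terms with $n\le i\le m-1$ are gathered into the functions $\beta_{i-n}(w)$.

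Finally I would add the two computations, conjugate all coefficients, and match cases (i)--(iii) term by term. The step requiring the most care is the bookkeeping in the differentiation part: one must verify that the vanishing of the low-order derivatives of $\psi_{n}$ truncates the Leibniz sum precisely at $k=m-n$, which is exactly what forces $D_{\psi_{n},\varphi_{n},n}^{\ast}K_{w,\alpha}^{[m]}$ to have no component below $K_{w,\alpha}^{[n]}$ and to vanish entirely when $m<n$. Pinning down the top coefficient $\binom{m}{n}\psi_{n}^{(n)}(w)\big(\varphi_{n}'(w)\big)^{m-n}$ from Fa\`a di Bruno is the other step that must be made explicit, whereas the lower-order coefficients need only be identified as products of the stated derivatives, exactly as recorded in the $\alpha_{j}$ and $\beta_{j}$.
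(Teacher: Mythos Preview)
Your proposal is correct and follows essentially the same route as the paper: both compute $\langle f,T^{\ast}K_{w,\alpha}^{[m]}\rangle=(Tf)^{(m)}(w)$ via the Leibniz rule, use the vanishing of $\psi_{n}^{(l)}(w)$ for $l<n$ to truncate the $D_{\psi_{n},\varphi_{n},n}$ sum at $k\le m-n$, and then read off the adjoint from the resulting linear combination of $f^{(j)}(w)$'s. The only cosmetic difference is that you invoke Fa\`a di Bruno explicitly to isolate the top coefficient, whereas the paper leaves this implicit in the description of the $\alpha_{j}$ and $\beta_{j}$.
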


\begin{proof}
Let $f$ be an arbitrary function in $\mathcal{H}_{\alpha}$. For each non-negative integer $m$, we have
\begin{align*}
\big\langle f,C_{\psi_{0},\varphi_{0}}^{\ast}K_{w,\alpha}^{[m]}\big\rangle&=\big\langle C_{\psi_{0},\varphi_{0}}f,K_{w,\alpha}^{[m]} \big\rangle=\sum_{j=0}^{m}\binom{m}{j}\psi_{0}^{(m-j)}(w)\big(f\circ\varphi_{0}\big)^{(j)}(w)\\
&=\bigl\langle f, \sum_{j=0}^{m-1}\overline{\alpha_{j}(w)}K_{w,\alpha}^{[j]}+
\overline{\psi_{0}(w)\big(\varphi^{\prime}_{0}(w)\big)^{m}}K_{w,\alpha}^{[m]}\bigr\rangle\text{.}
  \end{align*}
Since $f$ is an arbitrary function in $\mathcal{H}_{\alpha}$, we conclude that
\begin{equation}\label{e1}
C_{\psi_{0},\varphi_{0}}^{\ast}K_{w,\alpha}^{[m]}=\sum_{j=0}^{m-1}\overline{\alpha_{j}(w)}K_{w,\alpha}^{[j]}+
\overline{\psi_{0}(w)\big(\varphi'_{0}(w)\big)^{m}}K_{w,\alpha}^{[m]}\text{.}
\end{equation}\par
  Let $m<n$. Since $\psi_{n}$ has a zero at $w$ of order at least $n$, we have
 \begin{align*}
  \big\langle f,D_{\psi_{n},\varphi_{n},n}^{\ast}K_{w,\alpha}^{[m]}\big\rangle&=\big(\psi_{n}\cdot\big(f^{(n)}\circ \varphi_{n}\big)\big)^{(m)}(w)\\
  &=\sum_{i=0}^{m}\binom{m}{i}\psi_{n}^{(m-i)}(w)\big(f^{(n)}\circ \varphi_{n}\big)^{(i)}(w)\\
  &=0\text{.}
   \end{align*}
  It shows that $D_{\psi_{n},\varphi_{n},n}^{\ast}K_{w,\alpha}^{[m]}=0$.\par
   Now assume that $m\geq n$. We obtain
   \begin{eqnarray} \label{m4}
   \bigl\langle f,   D_{\psi_{n},\varphi_{n},n}^{\ast}K_{w,\alpha}^{[m]}\bigr\rangle&=&\sum_{i=0}^{m}\binom{m}{i}\psi_{n}^{(m-i)}(w)
   \big(f^{(n)}\circ\varphi_{n}\big)^{(i)}(w)\nonumber \\
   &=&\sum_{i=0}^{m-n}\binom{m}{i}\psi_{n}^{(m-i)}(w)
   \big(f^{(n)}\circ\varphi_{n}\big)^{(i)}(w)\nonumber \\
   &+&\sum_{i=m-n+1}^{m}\binom{m}{i}\psi_{n}^{(m-i)}(w)
   \big(f^{(n)}\circ\varphi_{n}\big)^{(i)}(w)\nonumber \\
   &=&\sum_{i=0}^{m-n}\binom{m}{i}\psi_{n}^{(m-i)}(w)
   \big(f^{(n)}\circ\varphi_{n}\big)^{(i)}(w)\text{.}
   \end{eqnarray}
   If $m>n$, then  by (\ref{m4}), we get
   \begin{align*}
   \bigl\langle f,   D_{\psi_{n},\varphi_{n},n}^{\ast}K_{w,\alpha}^{[m]}\bigr\rangle&= \bigl\langle f,\sum_{i=0}^{m-n-1}\overline{\beta_{i}(w)}K_{w,\alpha}^{[i+n]}+\binom{m}{m-n}\overline{\psi_{n}^{(n)}(w)
   \big(\varphi^{\prime}_{n}(w)\big)^{m-n}}K_{w,\alpha}^{[m]}\bigr\rangle\text{,}
   \end{align*}
   so
   $$D_{\psi_{n},\varphi_{n},n}^{\ast}K_{w,\alpha}^{[m]}=\sum_{i=n}^{m-1}\overline{\beta_{i-n}(w)}K_{w,\alpha}^{[i]}+
   \binom{m}{n}\overline{\psi_{n}^{(n)}(w)
   \big(\varphi^{\prime}_{n}(w)\big)^{m-n}}K_{w,\alpha}^{[m]}\text{.}$$
   If $m=n$, then by (\ref{m4}), we see that
    \begin{align*}
   \bigl\langle f,   D_{\psi_{n},\varphi_{n},n}^{\ast}K_{w,\alpha}^{[m]}\bigr\rangle&=\psi_{n}^{(n)}(w)
   f^{(n)}(w)=\bigl\langle f,\overline{\psi_{n}^{(n)}(w)}K_{w,\alpha}^{[n]}\bigr\rangle \text{.}
  \end{align*}
   Hence the result follows.
\end{proof}
In the next proposition, we identify all possible eigenvalues of $C_{\psi_{0},\varphi_{0}}+D_{\psi_{n},\varphi_{n},n}$.

\begin{prop}\label{1prop}
 Suppose that $C_{\psi_{0},\varphi_{0}}$ and $D_{\psi_{n},\varphi_{n},n}$ are bounded operators on  $\mathcal{H}_{\alpha}$ and the fixed point $w$ of $\varphi_{0}$ and $\varphi_{n}$ is inside the open unit disk.
If the function $\psi_{n}$ has a zero at $w$ of order at least $n$, then
$$\bigg\{\psi_{0}(w),\psi_{0}(w)\big(\varphi^{\prime}_{0}(w)\big)^{l}:l \in \mathbb{N}_{<n}  \bigg\}\bigcup  \bigg\{ \psi_{0}(w)\big(\varphi^{\prime}_{0}(w)\big)^{l}+\binom{l}{n}\psi_{n}^{(n)}(w)\big(\varphi^{\prime}_{n}(w)
\big)^{l-n}: l \in \mathbb{N}_{\geq n}\bigg\}$$
contains the point spectrum of
$C_{\psi_{0},\varphi_{0}}+D_{\psi_{n},\varphi_{n},n}$.
\end{prop}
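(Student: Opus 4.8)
The plan is to fix a point of the point spectrum and show it must coincide with one of the diagonal coefficients produced by Lemma \ref{lemma2}. Write $T=C_{\psi_{0},\varphi_{0}}+D_{\psi_{n},\varphi_{n},n}$ for brevity, and suppose $\lambda\in\sigma_{p,\alpha}(T)$ with a nonzero eigenvector $f$, so that $Tf=\lambda f$. The first step is to record, for every non-negative integer $m$, the scalar identity obtained by pairing the eigenvalue equation against the reproducing kernel $K_{w,\alpha}^{[m]}$:
$$\lambda f^{(m)}(w)=\lambda\langle f,K_{w,\alpha}^{[m]}\rangle=\langle Tf,K_{w,\alpha}^{[m]}\rangle=\langle f,T^{\ast}K_{w,\alpha}^{[m]}\rangle.$$
Here I would substitute the three explicit formulas for $T^{\ast}K_{w,\alpha}^{[m]}$ from Lemma \ref{lemma2} and use $\langle f,K_{w,\alpha}^{[j]}\rangle=f^{(j)}(w)$. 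The crucial structural feature is that each $T^{\ast}K_{w,\alpha}^{[m]}$ is a combination of $K_{w,\alpha}^{[0]},\ldots,K_{w,\alpha}^{[m]}$ in which the coefficient of the top term $K_{w,\alpha}^{[m]}$ is the conjugate of
$$d_{m}=\begin{cases}\psi_{0}(w)\big(\varphi'_{0}(w)\big)^{m}, & m<n,\\[2pt]\psi_{0}(w)\big(\varphi'_{0}(w)\big)^{m}+\binom{m}{n}\psi_{n}^{(n)}(w)\big(\varphi'_{n}(w)\big)^{m-n}, & m\ge n,\end{cases}$$
while all lower coefficients are conjugates of the $\alpha_{j}(w)$ and $\beta_{j}(w)$. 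Pairing against $f$ therefore yields, for each $m$, a relation of the form
$$\big(d_{m}-\lambda\big)f^{(m)}(w)=-\sum_{j=0}^{m-1}c_{m,j}\,f^{(j)}(w)$$
for suitable scalars $c_{m,j}$ built from the $\alpha_{j}$'s and $\beta_{j}$'s.

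The second step extracts $\lambda$ from this triangular system. Since $f$ is analytic on $\mathbb{D}$ and $w\in\mathbb{D}$, the function $f$ is nonzero precisely when not all of its Taylor coefficients at $w$ vanish; hence there is a least non-negative integer $m_{0}$ with $f^{(m_{0})}(w)\ne 0$. Evaluating the displayed relation at $m=m_{0}$ makes the entire right-hand side vanish, because every term there involves some $f^{(j)}(w)$ with $j<m_{0}$. This forces $\big(d_{m_{0}}-\lambda\big)f^{(m_{0})}(w)=0$, and dividing by the nonzero factor $f^{(m_{0})}(w)$ gives $\lambda=d_{m_{0}}$.

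Finally I would identify $d_{m_{0}}$ with a member of the asserted set according to the two cases in the definition of $d_{m}$: if $m_{0}<n$ then $\lambda=\psi_{0}(w)\big(\varphi'_{0}(w)\big)^{m_{0}}$, which lies in the first family (and equals $\psi_{0}(w)$ when $m_{0}=0$), while if $m_{0}\ge n$ then $\lambda$ lies in the second family with $l=m_{0}$. Since $\lambda$ was an arbitrary point of the point spectrum, the containment follows. The only genuinely mechanical part is transcribing the recursion correctly from Lemma \ref{lemma2}; the decisive idea, and the step I expect to carry the argument, is the minimal-index observation, which isolates a single diagonal coefficient and thereby pins down $\lambda$ without having to solve the full triangular system.
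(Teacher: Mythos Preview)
Your proof is correct and follows essentially the same approach as the paper: both arguments pick the least $m_{0}$ with $f^{(m_{0})}(w)\ne 0$ and read off $\lambda$ from the resulting identity at that index. The only cosmetic difference is that you route the computation through Lemma \ref{lemma2} by pairing with $K_{w,\alpha}^{[m]}$, whereas the paper differentiates the eigenvalue equation directly and applies the Leibniz rule; these are dual packagings of the same calculation.
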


\begin{proof}
Let $\lambda$ be an arbitrary eigenvalue for $C_{\psi_{0},\varphi_{0}}+D_{\psi_{n},\varphi_{n},n}$ with corresponding eigenvector $f$. Note that
\begin{equation}\label{e2}
\lambda f(z)=\psi_{0}(z)f\big(\varphi_{0}(z)\big)+\psi_{n}(z)f^{(n)}\big(\varphi_{n}(z)\big)
\end{equation}
for each $z \in \mathbb{D}$. If $f(w)\neq 0$, then $\lambda=\psi_{0}(w)$. Let $f$ have a zero at $w$ of order $l\geq 1$. Differentiate (\ref{e2}) $l$ times and evaluate it at the point $z=w$ to obtain
\begin{eqnarray}\label{e3}
\lambda f^{(l)}(w)&=&\sum_{j=0}^{l}\binom{l}{j}\psi_{0}^{(l-j)}(w)(f\circ \varphi_{0})^{(j)}(w)\nonumber \\
&+&\sum_{j=0}^{l}\binom{l}{j}\psi_{n}^{(l-j)}(w)(f^{(n)}\circ \varphi_{n})^{(j)}(w)\text{.}
\end{eqnarray}\par
First assume that $l<n$. Since $\psi_{n}$ has a zero at $w$ of order at least $n$, we have $\lambda=\psi_{0}(w)\big(\varphi_{0}^{\prime}(w)\big)^{l}$ by (\ref{e3}).\par
Now assume that $l\geq n$. Then $\psi_{n}^{(l-j)}(w)=0$ for each $j>l-n$. Hence (\ref{e3}) implies that
$$\lambda f^{(l)}(w)=\sum_{j=0}^{l}\binom{l}{j}\psi_{0}^{(l-j)}(w)\big(f\circ \varphi_{0}\big)^{(j)}(w)+\sum_{j=0}^{l-n}\binom{l}{j}\psi_{n}^{(l-j)}(w)\big(f^{(n)}\circ \varphi_{n}\big)^{(j)}(w)$$
and so
$$\lambda f^{(l)}(w)=\psi_{0}(w)f^{(l)}(w)\big( \varphi_{0}^{\prime}(w)\big)^{l}+\binom{l}{l-n}\psi_{n}^{(n)}(w)f^{(l)}(w) \big(\varphi_{n}^{\prime}(w)\big)^{l-n}\text{.}$$
(Note that in case of $\varphi^{\prime}_{n}(w)=0$ and $l=n$, we set $\big(\varphi_{n}^{\prime}(w)\big)^{l-n}=1$.)
Therefore, in this case, any eigenvalue must have the form
$$\psi_{0}(w)\big( \varphi_{0}^{\prime}(w)\big)^{l}+\binom{l}{n}\psi_{n}^{(n)}(w) \big(\varphi_{n}^{\prime}(w)\big)^{l-n}$$
for a natural number $l$ with $l\geq n$.
\end{proof}

\begin{prop}\label{2prop}
Suppose that the hypotheses of Proposition \ref{1prop} hold. Then the point spectrum of  $\big(C_{\psi_{0},\varphi_{0}}+ D_{\psi_{n},\varphi_{n},n}\big)^{\ast}$ contains
$$\bigg\{\overline{\psi_{0}(w)},\overline{\psi_{0}(w)\big(\varphi^{\prime}_{0}(w)\big)^{l}}:l \in \mathbb{N}_{<n}  \bigg\}\bigcup  \bigg\{ \overline{\psi_{0}(w)\big(\varphi^{\prime}_{0}(w)\big)^{l}}+\binom{l}{n}\overline{\psi_{n}^{(n)}(w)
\big(\varphi^{\prime}_{n}(w)
\big)^{l-n}}
: l \in \mathbb{N}_{\geq n}\bigg\}\text{.}$$
\end{prop}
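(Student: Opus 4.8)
The plan is to read the eigenvalues of the adjoint straight off the triangular action computed in Lemma \ref{lemma2}. The crucial observation is that, for every non-negative integer $m$, each of the three formulas there expresses $(C_{\psi_{0},\varphi_{0}}+D_{\psi_{n},\varphi_{n},n})^{\ast}K_{w,\alpha}^{[m]}$ as a linear combination of $K_{w,\alpha}^{[0]},\dots,K_{w,\alpha}^{[m]}$, i.e.\ of reproducing kernels of order at most $m$. Consequently the finite-dimensional subspace $M_{m}=\mathrm{span}\{K_{w,\alpha}^{[0]},\dots,K_{w,\alpha}^{[m]}\}$ is invariant under $(C_{\psi_{0},\varphi_{0}}+D_{\psi_{n},\varphi_{n},n})^{\ast}$, and the flag $M_{0}\subset M_{1}\subset\cdots$ exhibits the adjoint as triangular with respect to this kernel basis.

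First I would record that the kernels $K_{w,\alpha}^{[0]},K_{w,\alpha}^{[1]},\dots$ are linearly independent: since $\langle f,K_{w,\alpha}^{[j]}\rangle=f^{(j)}(w)$ and the functionals $f\mapsto f^{(j)}(w)$ are independent, so are their Riesz representatives. Hence the listed kernels form a genuine basis of $M_{m}$, and with respect to the ordered basis $(K_{w,\alpha}^{[0]},\dots,K_{w,\alpha}^{[m]})$ the matrix of $(C_{\psi_{0},\varphi_{0}}+D_{\psi_{n},\varphi_{n},n})^{\ast}|_{M_{m}}$ is upper triangular. Its diagonal entries are exactly the coefficients of $K_{w,\alpha}^{[m]}$ furnished by Lemma \ref{lemma2}, namely $\overline{\psi_{0}(w)(\varphi_{0}^{\prime}(w))^{m}}$ when $m<n$, the value $\overline{\psi_{0}(w)(\varphi_{0}^{\prime}(w))^{n}}+\overline{\psi_{n}^{(n)}(w)}$ when $m=n$, and $\overline{\psi_{0}(w)(\varphi_{0}^{\prime}(w))^{m}}+\binom{m}{n}\overline{\psi_{n}^{(n)}(w)(\varphi_{n}^{\prime}(w))^{m-n}}$ when $m>n$.

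Next I would identify these diagonal entries with the conjugates of the values listed in Proposition \ref{1prop}; using $\binom{n}{n}=1$ together with the convention $(\varphi_{n}^{\prime}(w))^{0}=1$ shows that the $m=n$ entry also fits the general $m\geq n$ formula, so the diagonals run through precisely the conjugates of the candidates in Proposition \ref{1prop} as $m$ ranges over $\mathbb{N}$. Because a triangular matrix has its diagonal entries as its eigenvalues, each such number is an eigenvalue of the finite-dimensional restriction $(C_{\psi_{0},\varphi_{0}}+D_{\psi_{n},\varphi_{n},n})^{\ast}|_{M_{m}}$; since $M_{m}$ is invariant, the associated eigenvector is an eigenvector of the full operator, so each of these numbers lies in $\sigma_{p,\alpha}\big((C_{\psi_{0},\varphi_{0}}+D_{\psi_{n},\varphi_{n},n})^{\ast}\big)$. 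Letting $m$ run over all non-negative integers captures every value in the asserted set.

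The main point to watch is bookkeeping rather than a genuine obstacle: one must verify that the diagonal coefficient produced by Lemma \ref{lemma2} for each $m$ coincides \emph{exactly} with the conjugate of the corresponding candidate from Proposition \ref{1prop} (including the boundary case $m=n$ and the degenerate case $\varphi_{n}^{\prime}(w)=0$), and one must confirm the linear independence of the kernels so that ``triangular'' genuinely forces the diagonal entries to be eigenvalues rather than merely candidate values.
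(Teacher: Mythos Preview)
Your proposal is correct and follows essentially the same approach as the paper: both use Lemma \ref{lemma2} to see that the adjoint acts upper-triangularly on the finite-dimensional invariant subspaces $\mathrm{span}\{K_{w,\alpha}^{[0]},\dots,K_{w,\alpha}^{[m]}\}$ and then read off the diagonal entries as eigenvalues. The only cosmetic difference is that the paper passes through the block decomposition $\mathcal{H}_{\alpha}=K_{l}\oplus K_{l}^{\perp}$ and cites a spectrum-union fact for block-triangular operators, whereas you argue more directly that an eigenvector of the restriction to an invariant subspace is already an eigenvector of the full operator; your route is slightly cleaner but the substance is identical.
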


\begin{proof}
Let $l$ be a positive integer with $l\geq n$ and $K_{l}$ denote the span of $\{K_{w,\alpha},K_{w,\alpha}^{[1]},...,K_{w,\alpha}^{[l]}\}$. Note that this spanning set is linearly independent and so is a basis. Let $A_{l}$ be the matrix of the operator $\big(C_{\psi_{0},\varphi_{0}}+ D_{\psi_{n},\varphi_{n},n}\big)^{\ast}$ restricted to $K_{l}$ with respect to this basis. We infer from  Lemma
\ref{lemma2} that
$$ A_{l}=\left[\begin{array}{*5c}
B_{n}&\ast &...&\ast\\
 0&\overline{\psi_{0}(w)\big(\varphi^{\prime}_{0}(w)\big)^{n}+\psi_{n}^{(n)}(w)}&...&\ast\\
 0&0&...&\ast\\
 \vdots&\vdots&\ddots&\vdots\\
 0&0&...&\overline{\psi_{0}(w)\big(\varphi^{\prime}_{0}(w)\big)^{l}}+\binom{l}{n}\overline{\psi_{n}^{(n)}(w)\big(\varphi^{\prime}_{n}(w)
\big)^{l-n}}
 \end{array}\right]\text{,}$$\\
 where $B_{n}$ is an $n\times n$ upper triangular matrix that its  main diagonal entries are $\overline{\psi_{0}(w)}, \overline{\psi_{0}(w)\varphi^{\prime}_{0}(w)},...,\overline{\psi_{0}(w)
 \big(\varphi^{\prime}_{0}(w)\big)^{n-1}} $. Then $A_{l}$ is an upper triangular matrix too. Since the subspace $K_{l}$ is finite dimensional, it is closed and so the space $\mathcal{H}_{\alpha}$ can be decomposed as
 $$\mathcal{H}_{\alpha}=K_{l}\oplus K_{l}^{\perp}\text{.}$$
 Then the block matrix of $\big(C_{\psi_{0},\varphi_{0}}+ D_{\psi_{n},\varphi_{n},n}\big)^{\ast}$ with respect to the above decomposition must be of the form
 $$ \left[\begin{array}{*2c}
A_{l}&C_{l}\\
 0&E{l}
 \end{array}\right]\text{}$$\\
(note that  $K_{l}$ is invariant under $\big(C_{\psi_{0},\varphi_{0}}+ D_{\psi_{n},\varphi_{n},n}\big)^{\ast}$ by Lemma \ref{lemma2} and so the lower left corner of the above matrix is $0$). Since the spectrum of $\big(C_{\psi_{0},\varphi_{0}}+ D_{\psi_{n},\varphi_{n},n}\big)^{\ast}$ is the union of the spectrum of $A_{l}$ and the spectrum of $E_{l}$ (see \cite[p. 270]{cm}), we conclude that the union of
$\bigg\{\overline{\psi_{0}(w)},\overline{\psi_{0}(w)\big(\varphi^{\prime}_{0}(w)\big)^{t}}:t \in \mathbb{N} ~\text{and}~ t<n  \bigg\}$ and   $\bigg\{ \overline{\psi_{0}(w)\big(\varphi^{\prime}_{0}(w)\big)^{t}}+\binom{t}{n}\overline{\psi_{n}^{(n)}(w)
\big(\varphi^{\prime}_{n}(w)
\big)^{t-n}}
: t \in \mathbb{N}~\text{and} ~n\leq t \leq l\bigg\}$ is the subset of  $\sigma_{p,\alpha}\big(\big(C_{\psi_{0},\varphi_{0}}+ D_{\psi_{n},\varphi_{n},n}\big)^{\ast}\big)$. Since $l$ is arbitrary, the result follows.
\end{proof}
%The following corollary follows immediately from Propositions  \ref{1prop} and \ref{2prop}.

%\begin{cor}\label{1cor}
%Suppose that the hypotheses of Proposition \ref{1prop} hold. If  $C_{\psi_{0},\varphi_{0}}+D_{\psi_{n},\varphi_{n},n}$ has a normal point spectrum, then
%\begin{align*}
%  \sigma_{p,\alpha}(C_{\psi_{0},\varphi_{0}}+D_{\psi_{n},\varphi_{n},n})=
% &\bigg\{\psi_{0}(w),\psi_{0}(w)\big(\varphi^{\prime}_{0}(w)\big)^{l}:l \in \mathbb{N}_{<n}  \bigg\}\\
 % &\bigcup \bigg\{ \psi_{0}(w)\big(\varphi^{\prime}_{0}(w)\big)^{l}+\binom{l}{n}\psi_{n}^{(n)}(w)
 % \big(\varphi^{\prime}_{n}(w)
%\big)^{l-n}
%: l \in \mathbb{N}_{\geq n}\bigg\}\text{.}
%   \end{align*}
%\end{cor}

 In the following theorem, we characterize the spectrum of an operator  $D_{\psi,\varphi,n}$ under the conditions of Proposition \ref{1prop}. The spectrum of an operator $D_{\psi,\varphi,n}$ which was obtained in \cite[Theorem 3.1]{hw} is an example for Theorem  \ref{4theorem}.

\begin{thm}\label{4theorem}
Suppose that the hypotheses of Proposition \ref{1prop} hold.
If $C_{\psi_{0},\varphi_{0}}+D_{\psi_{n},\varphi_{n},n}$ is compact on $\mathcal{H}_{\alpha}$, then
\begin{align*}
  \sigma_{\alpha}(C_{\psi_{0},\varphi_{0}}+D_{\psi_{n},\varphi_{n},n})&=\{0\}\\
  &\bigcup\bigg\{\psi_{0}(w),\psi_{0}(w)\big(\varphi^{\prime}_{0}(w)\big)^{l}:l \in \mathbb{N}_{<n}  \bigg\}\\
  &\bigcup \bigg\{ \psi_{0}(w)\big(\varphi^{\prime}_{0}(w)\big)^{l}+\binom{l}{n}\psi_{n}^{(n)}(w)\big(\varphi^{\prime}_{n}(w)
\big)^{l-n}
: l \in \mathbb{N}_{\geq n}\bigg\}\text{.}
   \end{align*}
   In particular,  if $\psi_{n}^{(n)}(w)=0$, then the operator $D_{\psi_{n},\varphi_{n},n}$ is quasinilpotent; that is, its spectrum is $\{0\}$.
\end{thm}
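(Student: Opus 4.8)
The plan is to combine the spectral dichotomy for compact operators with the two-sided information already extracted in Propositions \ref{1prop} and \ref{2prop}. Write $T=C_{\psi_{0},\varphi_{0}}+D_{\psi_{n},\varphi_{n},n}$, and let $S$ denote the union of the two displayed sets of candidate eigenvalues appearing in Proposition \ref{1prop}. The only external ingredients I would invoke are the standard facts that, for a compact operator on an infinite-dimensional Hilbert space, $0$ always lies in the spectrum, every nonzero spectral value is an eigenvalue (so that $\sigma_{\alpha}(T)\setminus\{0\}=\sigma_{p,\alpha}(T)\setminus\{0\}$), and that $\sigma_{\alpha}(T^{\ast})=\overline{\sigma_{\alpha}(T)}$ for any bounded operator.

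For the inclusion $\sigma_{\alpha}(T)\subseteq\{0\}\cup S$, I would argue as follows. Since $\mathcal{H}_{\alpha}$ is infinite-dimensional and $T$ is compact, every nonzero point of $\sigma_{\alpha}(T)$ is an eigenvalue, so $\sigma_{\alpha}(T)\setminus\{0\}\subseteq\sigma_{p,\alpha}(T)$. By Proposition \ref{1prop} we have $\sigma_{p,\alpha}(T)\subseteq S$, whence $\sigma_{\alpha}(T)\setminus\{0\}\subseteq S$ and therefore $\sigma_{\alpha}(T)\subseteq\{0\}\cup S$.

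For the reverse inclusion, $0\in\sigma_{\alpha}(T)$ is automatic from compactness together with infinite-dimensionality. To capture $S$ itself I would pass to the adjoint: Proposition \ref{2prop} gives $\overline{S}\subseteq\sigma_{p,\alpha}(T^{\ast})\subseteq\sigma_{\alpha}(T^{\ast})=\overline{\sigma_{\alpha}(T)}$, and taking complex conjugates yields $S\subseteq\sigma_{\alpha}(T)$. Combining the two inclusions gives $\sigma_{\alpha}(T)=\{0\}\cup S$, which is precisely the asserted formula.

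For the \emph{in particular} statement I would specialize to $\psi_{0}\equiv 0$, so that $C_{\psi_{0},\varphi_{0}}=0$ and $T=D_{\psi_{n},\varphi_{n},n}$; the compactness hypothesis then reads that $D_{\psi_{n},\varphi_{n},n}$ is compact. With $\psi_{0}(w)=0$ the first candidate set collapses to $\{0\}$, and if in addition $\psi_{n}^{(n)}(w)=0$ then every term $\psi_{0}(w)\big(\varphi_{0}'(w)\big)^{l}+\binom{l}{n}\psi_{n}^{(n)}(w)\big(\varphi_{n}'(w)\big)^{l-n}$ vanishes as well, so $S=\{0\}$ and the main formula gives $\sigma_{\alpha}(D_{\psi_{n},\varphi_{n},n})=\{0\}$, i.e.\ quasinilpotence. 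I do not anticipate a genuine obstacle: the whole argument is bookkeeping built on the compact-operator spectral dichotomy together with Propositions \ref{1prop} and \ref{2prop}. The one point deserving care is the role of $0$, since a value in $S$ might happen to coincide with $0$; but because $0\in\sigma_{\alpha}(T)$ holds unconditionally this causes no trouble, while every nonzero candidate is reached through the eigenvalue identity $\sigma_{\alpha}(T)\setminus\{0\}=\sigma_{p,\alpha}(T)\setminus\{0\}$ and the adjoint symmetry.
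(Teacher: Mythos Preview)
Your argument is correct and is precisely the approach the paper has in mind: the paper does not supply a separate proof of Theorem \ref{4theorem} because it is intended to follow immediately from Propositions \ref{1prop} and \ref{2prop} together with the Riesz spectral theory of compact operators, exactly as you outline. Your handling of the ``in particular'' clause via the specialization $\psi_{0}\equiv 0$ (with, say, $\varphi_{0}=\varphi_{n}$ so that the common fixed point hypothesis is vacuously met) is also the intended reading.
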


In the next theorem, we obtain the spectral radius of a compact operator  $D_{\psi,\varphi,n}$.

\begin{thm}\label{20theorem}
Suppose that $D_{\psi,\varphi,n}$ is a compact operator on  $\mathcal{H}_{\alpha}$. Assume that the fixed point $w$ of $\varphi$ is inside the open unit disk and the function $\psi$ has a zero at $w$ of order $n$.
Then

$$r_{\alpha}\big(D_{\psi,\varphi,n}\big)=\binom{\big\lfloor \frac{n}{1-|\varphi^{\prime}(w)|}\big\rfloor}{n}\big|\psi^{(n)}(w)\big|\big|\varphi^{\prime}(w)\big|^{\big\lfloor \frac{n}{1-|\varphi^{\prime}(w)|}\big\rfloor-n}\text{,}$$
where $\lfloor \cdot \rfloor$ denotes the greatest integer function.
\end{thm}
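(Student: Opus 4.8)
The plan is to read the spectrum off from Theorem \ref{4theorem} and then reduce the computation of the spectral radius to a one-parameter combinatorial optimization. First I would observe that $D_{\psi,\varphi,n}=C_{0,\varphi}+D_{\psi,\varphi,n}$, so the hypotheses of Proposition \ref{1prop} are met with $\psi_{0}\equiv 0$, $\varphi_{0}=\varphi_{n}=\varphi$ and $\psi_{n}=\psi$; in particular $\psi_{0}(w)=0$. Applying Theorem \ref{4theorem} annihilates every term involving $\psi_{0}$ and yields
\begin{equation*}
\sigma_{\alpha}\big(D_{\psi,\varphi,n}\big)=\{0\}\cup\bigg\{\binom{l}{n}\psi^{(n)}(w)\big(\varphi^{\prime}(w)\big)^{l-n}:l\in\mathbb{N}_{\geq n}\bigg\}\text{.}
\end{equation*}
Hence $r_{\alpha}(D_{\psi,\varphi,n})=\big|\psi^{(n)}(w)\big|\,\sup_{l\geq n}g(l)$, where $g(l)=\binom{l}{n}\,b^{\,l-n}$ and $b=\big|\varphi^{\prime}(w)\big|$.

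The next step is to show this supremum is attained and to locate the maximizer. Since $\psi$ has a zero at $w$ of order exactly $n$, we have $\psi^{(n)}(w)\neq 0$, so compactness forces the eigenvalues $\binom{l}{n}\psi^{(n)}(w)(\varphi^{\prime}(w))^{l-n}$ to tend to $0$; as $\binom{l}{n}$ grows only polynomially in $l$, this gives $b<1$ and $g(l)\to 0$, so the supremum is a maximum attained at some finite $l\geq n$. To find it I would examine the ratio
\begin{equation*}
\frac{g(l+1)}{g(l)}=\frac{l+1}{l+1-n}\,b\text{,}
\end{equation*}
which exceeds $1$ precisely when $(l+1)(1-b)<n$, i.e.\ when $l+1<\frac{n}{1-b}$. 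Thus $g$ is strictly increasing up to $l=\big\lfloor \frac{n}{1-b}\big\rfloor$ and strictly decreasing afterward (with an incidental tie when $\frac{n}{1-b}$ is an integer), so $g$ is unimodal and attains its maximum at $l^{\ast}=\big\lfloor \frac{n}{1-b}\big\rfloor$. Substituting $l^{\ast}$ and restoring the factor $|\psi^{(n)}(w)|$ gives exactly the claimed value.

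I expect the only genuine work to be this unimodality argument together with the bookkeeping of the floor function, and in particular the degenerate case $\varphi^{\prime}(w)=0$: there $g(l)$ vanishes for $l>n$ and equals $1$ at $l=n$, so the maximizer is $l^{\ast}=n=\big\lfloor \frac{n}{1-0}\big\rfloor$ and the factor $b^{\,l^{\ast}-n}$ must be read as $0^{0}=1$, consistent with the convention already adopted in Proposition \ref{1prop}. Everything else---the reduction via Theorem \ref{4theorem} and factoring out the constant $|\psi^{(n)}(w)|$---is routine.
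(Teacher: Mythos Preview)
Your proposal is correct and follows the same overall route as the paper: invoke Theorem~\ref{4theorem} to read off the spectrum, then maximize $\binom{l}{n}|\varphi'(w)|^{\,l-n}$ over integers $l\geq n$. The differences are purely technical but worth noting. To obtain $|\varphi'(w)|<1$, the paper appeals to the Grand Iteration Theorem, whereas you deduce it from compactness (nonzero eigenvalues of a compact operator must tend to $0$, and $\psi^{(n)}(w)\neq 0$ since the zero has order exactly $n$); your argument is more self-contained and uses only the stated hypotheses. For the maximization itself, the paper interpolates by the continuous function $h(x)=x(x-1)\cdots(x-n+1)\,|\varphi'(w)|^{x-n}$, shows via calculus that $h$ has at most one critical point on $[n,\infty)$, and then compares consecutive integer values to locate the discrete maximum at $\big\lfloor n/(1-|\varphi'(w)|)\big\rfloor$. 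Your ratio test $g(l+1)/g(l)=\frac{l+1}{l+1-n}\,b$ is more direct: since this ratio is strictly decreasing in $l$, the sequence is automatically unimodal, and the crossing point $l+1=n/(1-b)$ gives the same maximizer without the continuous detour. Both treatments handle the degenerate case $\varphi'(w)=0$ separately with the convention $0^{0}=1$.
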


\begin{proof}
 Theorem \ref{4theorem} implies that $$\sigma_{\alpha}\big(D_{\psi,\varphi,n}\big)=\bigg\{ \binom{l}{n}\psi^{(n)}(w)\big(\varphi^{\prime}(w)
\big)^{l-n}
: l \in \mathbb{N}_{\geq n}\bigg\}
$$
and so
 $$r_{\alpha}\big(D_{\psi,\varphi,n}\big)=\sup \bigg\{ \binom{l}{n}\big|\psi^{(n)}(w)\big|\big|\varphi^{\prime}(w)
\big|^{l-n}
: l \in \mathbb{N}_{\geq n}\bigg\}\text{.}$$
If $\varphi^{\prime}(w)=0$, then $r_{\alpha}\big(D_{\psi,\varphi,n}\big)= \big|\psi^{(n)}(w)\big|$. Now suppose that
$\varphi^{\prime}(w)\neq0$.
Let the function $h(x)=x(x-1)...(x-n+1)\big|\varphi^{\prime}(w)\big|^{x-n}$ on $[n,+\infty)$. Since $\big|\varphi^{\prime}(w)\big| < 1$ (see the Grand Iteration Theorem), we conclude that $\lim_{x \rightarrow \infty}h(x)=0$.
Then $h$ is a bounded function on $[n,+\infty)$ and so it obtains an absolute maximum point. If $h^{\prime}(t)=0$ for some $t\in [n,+\infty)$, then $g(t)=-\ln \big|\varphi^{\prime}(w)\big|$, where $g(x)=\frac{1}{x}+\frac{1}{x-1}+...+\frac{1}{x-n+1}$ for each $x\in [n,+\infty)$. We can easily see that $g^{\prime}$ is strictly decreasing and so the function $h$ has at most one local extremum on $[n,+\infty)$, which must be its absolute maximum (note that if $h^{\prime}(t)\neq 0$ for all $t$, then $h$ has an absolute maximum of $n!$ at $n$).
Therefore, for obtaining $r_{\alpha}\big(D_{\psi,\varphi,n}\big)$, we must find the greatest natural number $l$ such that $l\geq n$ and
$$(l-1)...(l-n)\big|\varphi^{\prime}(w)\big|^{l-n-1}\leq l...(l-n+1)\big|\varphi^{\prime}(w)\big|^{l-n}
$$
or equivalently
$$l\leq \frac{n}{1-\big|\varphi^{\prime}(w)\big|}$$
(note that if  $n!=n(n-1)...1\cdot\big|\varphi^{\prime}(w)\big|^{n-n}>l...(l-n+1)\big|\varphi^{\prime}(w)\big|^{l-n}$ for each $l>n$, then we have $n!>(n+1)!\big|\varphi^{\prime}(w)\big|$. It shows that $n<\frac{n}{1-\big|\varphi^{\prime}(w)\big|}<n+1$ and so $\big\lfloor\frac{n}{1-|\varphi^{\prime}(w)|}\big\rfloor=n$).
Thus the quantity $\binom{l}{n}\big|\varphi^{\prime}(w)
\big|^{l-n}$ is maximized when $l=\bigg\lfloor\frac{n}{1-\big|\varphi^{\prime}(w)\big|}\bigg\rfloor$, so the conclusion follows.
\end{proof}

Suppose that $D_{\varphi}$ is compact on $\mathcal{H}_{\alpha}$. Then $D_{\varphi^{\prime }\circ\varphi,\varphi_{2},2}=D_{\varphi}D_{\varphi}$ is compact. Let $w \in \mathbb{D}$ be a fixed point of $\varphi$. If $\lambda$ is an eigenvalue for $D_{\varphi}$ corresponding to the eigenvector $f$, then  $\lambda^{2}$ is an eigenvector for $D_{\varphi^{\prime}\circ \varphi,\varphi_{2},2}$ corresponding to eigenvector $f$. First suppose that $\varphi^{\prime}(w)=\varphi^{\prime \prime}(w)=0$.   Theorem \ref{4theorem} dictates that $D_{\varphi^{\prime}\circ \varphi,\varphi_{2},2}$  is quasinilpotent and so $D_{\varphi}$ is quasinilpotent.
Now suppose that $\varphi^{ \prime}(w)=0$  and $\varphi^{\prime \prime}(w)\neq0$. Then $D_{\varphi}^{\ast}K_{w,\alpha}^{[2]}=\overline{\varphi^{\prime \prime}(w)}K_{w,\alpha}^{[2]}$ by \cite[Lemma 1]{s2}. Hence $\varphi^{\prime \prime}(w)$ is an eigenvalue for $D_{\varphi}$ and so $D_{\varphi}$ is not quasinilpotent. Moreover, using
Theorem \ref{20theorem} for $D_{\varphi^{\prime}\circ \varphi,\varphi_{2},2}$ shows that $\sigma_{\alpha}\big(D_{\varphi^{\prime}\circ \varphi,\varphi_{2},2}\big)=\{0,\big(\varphi^{\prime \prime}(w)\big)^{2}\}$, so
$$\{0,\varphi^{\prime \prime}(w)\} \subseteq \sigma_{\alpha}(D_{\varphi})\subseteq \{0,\varphi^{\prime \prime}(w), -\varphi^{\prime \prime}(w)\}\text{.}$$\par

Assume that $\varphi\equiv a$, where $a$ is constant  with $|a|<1$ so that $D_{\psi,\varphi,n}$ is bounded on $\mathcal{H}_{\alpha}$. Since $\|\varphi\|_{\infty}< 1$, the operator $D_{\psi,\varphi,n}$ is compact (see \cite{moradi}  and \cite{s4}). The spectra of some  of such operators $D_{\psi,\varphi,n}$ were found in \cite[Theorem 3.2]{hw} and \cite[Theorem 3.3]{hw}, but by the same idea which was stated in the proof of \cite[Theorem 3.2]{hw}, we can easily see that for  these operators, we obtain
$$
\sigma_{\alpha}\big(D_{\psi,\varphi,n}\big):=\begin{cases} \{0\}\cup\big\{\psi^{(n)}(a)\big\}, & \psi^{(n)}(a)\neq 0,\\
 \{0\},& \psi^{(n)}(a)= 0;\end{cases}
$$
moreover, if $\psi^{(n)}(a)\neq 0$, then $\psi$ is an eigenvector for $D_{\psi,\varphi,n}$ with corresponding eigenvalue $\psi^{(n)}(a)$.

\section{Norms}

We begin this section with an example which is a starting point for estimating a lower bound for $\big\|D_{\psi,\varphi,n}\big\|_{-1}$.

\begin{ex}\label{1example}
Suppose that $\varphi(z)=bz^{3}+az^{2}$ with $\frac{1}{2} < |a| < 1$ and $|a|+|b|< 1$. We can see that $\varphi(0)=\varphi^{\prime}(0)=0$ and $\varphi^{\prime \prime}(0)=2a$. By the  paragraph after Theorem \ref{20theorem}, we have $r_{\alpha}(D_{\varphi})=2|a|$ and so
$\big\|D_{\varphi}\big\|_{\alpha}\geq 2|a|>1$. Compare $2|a|$ with the lower bound for $\big\|D_{\varphi}\big\|_{-1}$ which was found in \cite[Proposition 4]{fh} (note that \cite[Proposition 4]{fh} implies that $\big\|D_{\varphi}\big\|_{-1}\geq 1$).
\end{ex}

The preceding example leads to obtain the lower estimate on the norm of  $D_{\psi,\varphi,n}$ on the Hardy space  by  using  the spectrum of a weighted composition--differentiation operator which was obtained in
Proposition \ref{2prop}.

\begin{prop}\label{5theorem}
Suppose that  $D_{\psi,\varphi,n}$ is a bounded operator on $H^{2}$. Assume that the fixed point $w$ of $\varphi$ is inside the open unit disk.
 \begin{enumerate}[label={(\roman*)}]
\item If $\varphi^{\prime}(w) \neq 0$, then
$$\big\|D_{\psi,\varphi,n}\big\|_{-1} \geq \big|\phi^{(n)}(w)\big|\binom{\big\lfloor \frac{n}{1-|\varphi^{\prime}(w)|}\big\rfloor}{n}\big|\varphi^{\prime}(w)\big|^{\big\lfloor \frac{n}{1-|\varphi^{\prime}(w)|}\big\rfloor-n}\text{;}$$
\item if $\varphi^{\prime}(w) = 0$, then
$$\big\|D_{\psi,\varphi,n}\big\|_{-1} \geq \big|\phi ^{(n)}(w)\big| \text{;}$$
\item if $\varphi^{\prime}(w) = 0$, $\psi^{\prime \prime}(w)=0$ and $n=1$, then
$$\big\|D_{\psi,\varphi,1}\big\|_{-1} \geq \max\bigg\{\big|\phi ^{\prime}(w)\big|,\big|\psi(w)\varphi^{\prime \prime}(w)\big|\bigg\}\text{,}$$

\end{enumerate}
where
$$
\phi(z):=\begin{cases} \psi(z), & \psi^{(0)}(w)=...=\psi^{(n-1)}(w)=0,\\
\psi(z)\big(\frac{w-z}{1-\overline{w}z}\big)^{n-m}, & \psi^{(0)}(w)=...=\psi^{(m-1)}(w)=0\text{,}~\psi^{(m)}(w)\neq0 ~\text{and}~ 1\leq m<n \text{,}\\
\psi(z)\big(\frac{w-z}{1-\overline{w}z}\big)^{n}, & \psi(w)\neq 0\text{.}
\end{cases}
$$
\end{prop}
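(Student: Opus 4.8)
The plan is to exploit the elementary inequality $\|T\|_{-1}=\|T^{\ast}\|_{-1}\geq|\lambda|$, valid for every eigenvalue $\lambda$ of the adjoint $T^{\ast}$, applied to $T=D_{\psi,\varphi,n}$, and to read off the relevant eigenvalues from Proposition \ref{2prop}. The obstacle is that Proposition \ref{2prop} requires $\psi$ to vanish at $w$ to order at least $n$, whereas here $\psi$ is arbitrary. So I would first pass from $\psi$ to the auxiliary symbol $\phi$, which is built precisely so that, in each of the three branches of its definition, $\phi$ has a zero of order at least $n$ at $w$, while $\|D_{\phi,\varphi,n}\|_{-1}=\|D_{\psi,\varphi,n}\|_{-1}$.

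To justify this norm equality, write $\phi=\psi\,u$, where $u$ is the appropriate power of the single Blaschke factor $\tfrac{w-z}{1-\overline{w}z}$ (so $u\equiv1$ in the first branch). Since $|w|<1$, this factor is an inner function, and multiplication by an inner function is an isometry of $H^{2}$. Because $D_{\phi,\varphi,n}f=u\cdot D_{\psi,\varphi,n}f$ pointwise, applying this isometry gives $\|D_{\phi,\varphi,n}f\|_{-1}=\|D_{\psi,\varphi,n}f\|_{-1}$ for every $f$, whence $\|D_{\phi,\varphi,n}\|_{-1}=\|D_{\psi,\varphi,n}\|_{-1}$ and in particular $D_{\phi,\varphi,n}$ is bounded. (This is the one place where the restriction to $\mathcal{H}_{-1}=H^{2}$ is essential: on a Bergman space inner multipliers are no longer isometric and the reduction collapses.) Now $\phi$ vanishes at $w$ to order at least $n$, so Proposition \ref{2prop} applies to $D_{\phi,\varphi,n}$ (take $\psi_{0}\equiv0$, $\psi_{n}=\phi$, $\varphi_{n}=\varphi$), and the point spectrum of $D_{\phi,\varphi,n}^{\ast}$ contains $\binom{l}{n}\overline{\phi^{(n)}(w)\big(\varphi^{\prime}(w)\big)^{l-n}}$ for every $l\in\mathbb{N}_{\geq n}$.

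For part (i), with $\varphi^{\prime}(w)\neq0$, I would combine $\|D_{\psi,\varphi,n}\|_{-1}=\|D_{\phi,\varphi,n}\|_{-1}\geq\binom{l}{n}\big|\phi^{(n)}(w)\big|\big|\varphi^{\prime}(w)\big|^{l-n}$ with the maximization over $l$ already performed in the proof of Theorem \ref{20theorem}: since $|\varphi^{\prime}(w)|<1$ (Grand Iteration Theorem), $\binom{l}{n}|\varphi^{\prime}(w)|^{l-n}$ is largest at $l=\big\lfloor\frac{n}{1-|\varphi^{\prime}(w)|}\big\rfloor$, giving the stated bound. For part (ii), with $\varphi^{\prime}(w)=0$, every eigenvalue with $l>n$ vanishes and only $l=n$ survives (with the convention $(\varphi^{\prime}(w))^{0}=1$), so $\|D_{\psi,\varphi,n}\|_{-1}\geq\big|\phi^{(n)}(w)\big|$; equivalently $K_{w,-1}^{[n]}$ is an eigenvector of $D_{\phi,\varphi,n}^{\ast}$ with eigenvalue $\overline{\phi^{(n)}(w)}$, as Lemma \ref{lemma2}(ii) shows at once.

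For part (iii) the bound $|\phi^{\prime}(w)|$ is simply part (ii) at $n=1$, so it remains to produce $|\psi(w)\varphi^{\prime\prime}(w)|$. Here I would compute $D_{\psi,\varphi,1}^{\ast}K_{w,-1}^{[2]}$ directly: for every $f$, $\big\langle f,D_{\psi,\varphi,1}^{\ast}K_{w,-1}^{[2]}\big\rangle=\big(\psi\cdot(f^{\prime}\circ\varphi)\big)^{\prime\prime}(w)$, and expanding by the product and chain rules while using $\varphi(w)=w$ and $\varphi^{\prime}(w)=0$ leaves $\psi^{\prime\prime}(w)f^{\prime}(w)+\psi(w)\varphi^{\prime\prime}(w)f^{\prime\prime}(w)$. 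The hypothesis $\psi^{\prime\prime}(w)=0$ annihilates the first term, so $D_{\psi,\varphi,1}^{\ast}K_{w,-1}^{[2]}=\overline{\psi(w)\varphi^{\prime\prime}(w)}\,K_{w,-1}^{[2]}$, exhibiting $\overline{\psi(w)\varphi^{\prime\prime}(w)}$ as an eigenvalue of the adjoint and hence $\|D_{\psi,\varphi,1}\|_{-1}\geq|\psi(w)\varphi^{\prime\prime}(w)|$; taking the maximum with part (ii) finishes the proof. The only genuinely delicate step in the whole argument is the isometric reduction of the second paragraph; everything after it is either spectral bookkeeping or a short direct differentiation.
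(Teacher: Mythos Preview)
Your proposal is correct and follows essentially the same approach as the paper: reduce to $\phi$ via the isometry of multiplication by the inner factor $\tfrac{w-z}{1-\overline{w}z}$ on $H^{2}$, apply Proposition~\ref{2prop} to $D_{\phi,\varphi,n}$, maximize the resulting eigenvalues as in the proof of Theorem~\ref{20theorem}, and for (iii) use the eigenvector relation $D_{\psi,\varphi,1}^{\ast}K_{w,-1}^{[2]}=\overline{\psi(w)\varphi^{\prime\prime}(w)}\,K_{w,-1}^{[2]}$ (which the paper cites from \cite{s2} rather than recomputing). The only cosmetic difference is that you write out the part (iii) differentiation explicitly instead of invoking the reference.
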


\begin{proof}
First suppose that $\psi^{(0)}(w)=...=\psi^{(n-1)}(w)=0$. Proposition \ref {2prop} and the idea which was used in the proof of Theorem \ref{20theorem} imply that
\begin{equation}\label{e3.1}
\big\|D_{\psi,\varphi,n}\big\|_{-1}\geq \big|\psi^{(n)}(w)\big|\binom{\big\lfloor \frac{n}{1-|\varphi^{\prime}(w)|}\big\rfloor}{n}\big|\varphi^{\prime}(w)\big|^{\big\lfloor \frac{n}{1-|\varphi^{\prime}(w)|}\big\rfloor-n}\text{.}
\end{equation}
(Note that in case of $\varphi^{\prime}(w)=0$, we set  $\big|\varphi^{\prime}(w)\big|^{\big\lfloor \frac{n}{1-|\varphi^{\prime}(w)|}\big\rfloor-n}=1$.)\par
 Now assume that $\psi(z)=(w-z)^{m}g(z)$, where $1\leq m<n$ and $g(w)\neq 0$. Let $\phi(z)=\psi(z)\big(\frac{w-z}{1-\overline{w}z}\big)^{n-m}$.
 Since $T_{\frac{w-z}{1-\overline{w}z}}$ is an isometry on $H^{2}$ and the $n$th derivative
of $\psi(z)\big(\frac{w-z}{1-\overline{w}z}\big)^{n-m}$  at the point $w$ is $\frac{(-1)^{n}n!g(w)}{\big(1-|w|^{2}\big)^{n-m}}$, by replacing $\phi$ with $\psi$ in (\ref{e3.1}), we  obtain
$$\big\|D_{\psi,\varphi,n}\big\|_{-1}=\big\|D_{\phi,\varphi,n}\big\|_{-1} \geq \frac{n!| g(w)|}{\big(1-|w|^{2}\big)^{n-m}}
\binom{\big\lfloor \frac{n}{1-|\varphi^{\prime}(w)|}\big\rfloor}{n}
\big|\varphi^{\prime}(w)\big|^{\big\lfloor \frac{n}{1-|\varphi^{\prime}(w)|}\big\rfloor-n}\text{.}$$
\par
Now suppose that  $\psi(w)\neq 0$ and $\phi(z)=\psi(z)\big(\frac{w-z}{1-\overline{w}z}\big)^{n}$. By replacing $\phi$ with $\psi$ in (\ref{e3.1}), we  have
$$\big\|D_{\psi,\varphi,n}\big\|_{-1}=\big\|D_{\phi,\varphi,n}\big\|_{-1} \geq \frac{n!| \psi(w)|}{\big(1-|w|^{2}\big)^{n}}
\binom{\big\lfloor \frac{n}{1-|\varphi^{\prime}(w)|}\big\rfloor}{n}
\big|\varphi^{\prime}(w)\big|^{\big\lfloor \frac{n}{1-|\varphi^{\prime}(w)|}\big\rfloor-n}\text{.}$$\par
Note that if $\varphi^{\prime}(w)=0$ and $\psi^{\prime \prime}(w)=0$, then  $D_{\psi,\varphi,1}^{\ast}K_{w,-1}^{[2]}=\overline{\psi(w) \varphi^{\prime \prime}(w)}K_{w,-1}^{[2]}$ by \cite[Lemma 1]{s2}. Therefore, we conclude that $\big\|D_{\psi,\varphi,1}\big\|_{-1} \geq \big|\psi(w) \varphi^{\prime \prime}(w)\big|$. Hence the result follows.
\end{proof}

In the next example, we show that for some operators $D_{\varphi}$, Proposition \ref{5theorem} is more useful than \cite[Proposition 4]{fh} for  estimating the lower bound for  $\big\|D_{\varphi}\big\|_{-1}$.

\begin{ex}\label{2example}
Suppose that $\varphi(z)=az^{n}+bz$, where $\frac{1}{2}< |b|<1-|a|$ and $n$ is a positive integer that $n\geq 2$. Proposition \ref{5theorem} implies that
$$\big\|D_{\varphi}\big\|_{-1} \geq  \bigg\lfloor \frac{1}{1-|b|} \bigg\rfloor |b|^{\lfloor 1 /(1-|b|)\rfloor -1}>1$$
and so this lower bound is greater than the lower bound for $\big\|D_{\varphi}\big\|_{-1}$ which was estimated in \cite[Proposition 4]{fh}.

\end{ex}

In the following proposition, we obtain $\|D_{\psi,\varphi,n}\|_{\alpha}$, when $D_{\psi,\varphi,n}$ is a compact hyponormal (or cohyponormal) operator which satisfies the hypotheses of Proposition \ref{1prop}.

\begin{prop}\label{6proposition}
Suppose that $\psi$ is not identically zero and $\varphi$ is a nonconstant analytic self-map of $\mathbb{D}$ so that $D_{\psi,\varphi,n}$ is compact on $\mathcal{H}_{\alpha}$. Assume that $w$ is the fixed point of $\varphi$ and $\psi$ has a zero at $w$ of order at least $n$. Then $D_{\psi,\varphi,n}$ is hyponormal or  cohyponormal on  $\mathcal{H}_{\alpha}$ if and only if $\psi(z)=az^{n}$ and $\varphi(z)=bz$, where $a \in \mathbb{C}\setminus \{0\}$ and $b \in \mathbb{D}\setminus \{0\}$; moreover, in this case
$$\big\|D_{\psi,\varphi,n}\big\|_{\alpha}=n!|a|\binom{\big\lfloor \frac{n}{1-|b|}\big\rfloor}{n}|b|^{\big\lfloor \frac{n}{1-|b|}\big\rfloor-n}\text{.}$$
\end{prop}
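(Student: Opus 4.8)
The plan is to prove both implications together with the norm formula, where the easy content is the backward direction and the norm, and the real work is the forward direction. For the backward direction, assume $\psi(z)=az^{n}$ and $\varphi(z)=bz$. A direct computation gives $D_{\psi,\varphi,n}(z^{k})=a\,\tfrac{k!}{(k-n)!}\,b^{\,k-n}z^{k}$ for $k\ge n$ and $D_{\psi,\varphi,n}(z^{k})=0$ for $k<n$, so $D_{\psi,\varphi,n}$ is diagonal with respect to the orthonormal basis of normalized monomials of $\mathcal{H}_{\alpha}$; a diagonal operator is normal, hence both hyponormal and cohyponormal. For the norm, normality yields $\|D_{\psi,\varphi,n}\|_{\alpha}=r_{\alpha}(D_{\psi,\varphi,n})$, and Theorem \ref{20theorem} (with $w=0$, $\psi^{(n)}(0)=n!\,a$ and $\varphi'(0)=b$) evaluates this spectral radius to the claimed expression, settling the norm formula.

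The crux is the forward direction, and the first step is to upgrade hyponormality (or cohyponormality) to normality. Since $D_{\psi,\varphi,n}$ is compact, its spectrum is countable with $0$ as the only accumulation point, so $\sigma_{\alpha}(D_{\psi,\varphi,n})$ has zero planar area; by Putnam's inequality $\|T^{*}T-TT^{*}\|\le\frac{1}{\pi}\,\mathrm{Area}(\sigma(T))$ one concludes $T^{*}T=TT^{*}$, that is, $D_{\psi,\varphi,n}$ is normal (in the cohyponormal case apply this to $D_{\psi,\varphi,n}^{*}$). Normality is exactly what makes the kernel identities of Lemma \ref{lemma2} and the eigenvalue list of Proposition \ref{1prop} rigid enough to recover $\psi$ and $\varphi$.

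Next I would locate the fixed point and identify $\psi$. Because $\psi\not\equiv0$ has isolated zeros and $\varphi$ is nonconstant (so $\varphi(\mathbb{D})$ is open), one checks that $\ker D_{\psi,\varphi,n}$ is precisely the space of polynomials of degree $<n$, of dimension $n$; moreover $\psi^{(n)}(w)\neq0$, since otherwise Theorem \ref{4theorem} gives $\sigma_{\alpha}(D_{\psi,\varphi,n})=\{0\}$, forcing the normal operator $D_{\psi,\varphi,n}$ to be $0$ and hence $\psi\equiv0$. By normality $\ker D_{\psi,\varphi,n}=\ker D_{\psi,\varphi,n}^{*}$, while Lemma \ref{lemma2} exhibits the $n$ independent vectors $K_{w,\alpha},\dots,K_{w,\alpha}^{[n-1]}$ inside $\ker D_{\psi,\varphi,n}^{*}$; a dimension count then gives $\mathrm{span}\{K_{w,\alpha}^{[m]}:m<n\}=\{\text{polynomials of degree}<n\}$. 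A short common–denominator and degree comparison shows the left-hand span contains no nonzero polynomial unless $w=0$, so $w=0$. With $w=0$, Lemma \ref{lemma2} gives $D_{\psi,\varphi,n}^{*}K_{0,\alpha}^{[n]}=\overline{\psi^{(n)}(0)}\,K_{0,\alpha}^{[n]}$, and $K_{0,\alpha}^{[n]}$ is a nonzero multiple of $z^{n}$; normality promotes $z^{n}$ to an eigenvector of $D_{\psi,\varphi,n}$, and since $D_{\psi,\varphi,n}z^{n}=n!\,\psi$, matching eigenvector equations forces $\psi(z)=az^{n}$ with $a=\psi^{(n)}(0)/n!\neq0$.

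It remains to prove $\varphi(z)=bz$, which I expect to be the main obstacle. Writing $\varphi(z)=z\tilde\varphi(z)$, the identity $D_{\psi,\varphi,n}z^{k}=a\,\tfrac{k!}{(k-n)!}\,z^{k}\,\tilde\varphi(z)^{k-n}$ shows $D_{\psi,\varphi,n}$ is triangular with respect to the normalized monomial basis, with diagonal entries the eigenvalues of Proposition \ref{1prop}. The decisive fact is that a compact normal triangular operator is diagonal: summing the equalities $\|D_{\psi,\varphi,n}e_{k}\|=\|D_{\psi,\varphi,n}^{*}e_{k}\|$ over $k\le N$ and comparing the resulting column and row sums (where the above-diagonal block vanishes on one side) forces every off-diagonal entry to be $0$. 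Diagonality then reads $\tilde\varphi(z)^{k-n}=\text{const}$ for all $k>n$, and taking $k=n+1$ gives $\tilde\varphi\equiv b$, hence $\varphi(z)=bz$. Finally $\varphi$ nonconstant gives $b\neq0$, while boundedness of $D_{\psi,\varphi,n}$ forces $|b|<1$ (otherwise the eigenvalues $a\,\tfrac{k!}{(k-n)!}\,b^{\,k-n}$ are unbounded), so $b\in\mathbb{D}\setminus\{0\}$, completing the proof.
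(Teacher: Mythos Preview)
Your argument is correct. The overall skeleton---use the action of the adjoint on reproducing kernels together with (co)hyponormality to pin down first $w$, then $\psi$, then $\varphi$, and finally read off the norm as the spectral radius via Theorem \ref{20theorem}---matches the paper. The differences are in execution. You make the passage from ``compact hyponormal'' to ``normal'' explicit through Putnam's area inequality, whereas the paper repeatedly uses the transfer of eigenvectors from $T^{*}$ to $T$ (valid directly for cohyponormal $T$, and in the hyponormal case tacitly relying on the same compact-hyponormal $\Rightarrow$ normal upgrade). To obtain $w=0$ you match $\ker D_{\psi,\varphi,n}^{*}$ with the $n$-dimensional space of polynomials of degree $<n$ and argue no nontrivial combination of the $K_{w,\alpha}^{[m]}$ can be a polynomial when $w\ne 0$; the paper instead simply evaluates $D_{\psi,\varphi,n}K_{w,\alpha}$ and reads off $\bar w^{\,n}=0$ from the explicit formula, which is quicker (and avoids the slightly delicate ``degree comparison'' when $\alpha$ is not an integer---there the cleanest justification is just that $K_{w,\alpha}$ itself would have to be a polynomial). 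For $\varphi(z)=bz$ you invoke the linear-algebra fact ``normal $+$ lower-triangular $\Rightarrow$ diagonal'' on the monomial basis, which is a nice general principle; the paper achieves the same conclusion with one more kernel computation, using that $K_{0,\alpha}^{[n+1]}$ is a scalar multiple of $z^{n+1}$ and that $D_{\psi,\varphi,n}z^{n+1}=(n+1)!\,a\,z^{n}\varphi(z)$. Finally, for the converse the paper appeals to \cite[Proposition 3.2]{mfh}, while your direct diagonalization is self-contained. In short: your route is a bit more structural, the paper's a bit more computational; both arrive at the same place with comparable effort.
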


\begin{proof}
Suppose that $D_{\psi,\varphi,n}$ is hyponormal (or cohyponormal). If $\psi^{(n)}(w)=0$, then  $r_{\alpha}(D_{\psi,\varphi,n})=0$  by Theorem \ref{4theorem} and so $D_{\psi,\varphi,n}\equiv0$ by \cite[Proposition 4.6, p. 47]{c4}. It follows that $\psi \equiv 0$ or $\varphi \equiv 0$ (note that $D_{\psi,\varphi,n}\big(z^{n+1}\big)=(n+1)!\psi(z) \varphi(z)$) which is a contradiction. Hence we assume that $\psi^{(n)}(w)\neq0$. Since $\psi$ has a zero at $w$ of order $n$, Lemma \ref{lemma2} shows that $D_{\psi,\varphi,n}^{\ast}K_{w,\alpha}=0$. Hence $K_{w,\alpha}$ is an eigenvector for $D_{\psi,\varphi,n}^{\ast}$ corresponding to eigenvalue $0$. Since $D_{\psi,\varphi,n}$ is hyponormal (or cohyponormal), we conclude that $D_{\psi,\varphi,n} K_{w,\alpha}(z)=\frac{(\alpha+2)...(\alpha+n+1)\overline{w}^{n}\psi(z)}{(1-\overline{w}\varphi(z))^{\alpha+2+n}}=0$
 and so $w=0$. Lemma \ref{lemma2} implies that
 $$D_{\psi,\varphi,n}^{\ast}K_{0,\alpha}^{[n]}(z)=\overline{\psi^{(n)}(0)}K_{0,\alpha}^{[n]}(z)=\overline{\psi^{(n)}(0)}
(\alpha+2)...(\alpha+n+1)z^{n}\text{.}$$
 Since $D_{\psi,\varphi,n}$ is hyponormal (or cohyponormal), it follows that
 $$D_{\psi,\varphi,n}K_{0,\alpha}^{[n]}(z)=\psi^{(n)}(0)(\alpha+2)...(\alpha+n+1)z^{n}\text{.}$$
 Because
 $D_{\psi,\varphi,n}K_{0,\alpha}^{[n]}=n!(\alpha+2)...(\alpha+n+1)\psi$, we conclude that
 $\psi(z)=\frac{\psi^{(n)}(0)}{n!}z^{n}$, where $\psi^{(n)}(0)\neq 0$. Then  $\psi^{(m)}(0)=0$ for each $m\neq n$. Hence Lemma \ref{lemma2} shows that $$D_{\psi,\varphi,n}^{\ast}K_{0,\alpha}^{[n+1]}=(n+1)\overline{\psi^{(n)}(0)\varphi^{\prime}(0)}K_{0,\alpha}^{[n+1]}\text{.}$$
 Therefore, we have
 \begin{equation}\label{eI}
D_{\psi,\varphi,n}K_{0,\alpha}^{[n+1]}=(n+1)\psi^{(n)}(0)\varphi^{\prime}(0)K_{0,\alpha}^{[n+1]}\text{.}
 \end{equation}
 On the other hand, we obtain
 \begin{eqnarray} \label{eII}
 D_{\psi,\varphi,n}K_{0,\alpha}^{[n+1]}(z) &=& (n+1)!(\alpha+2)...(\alpha+n+2)\psi(z)\varphi(z) \nonumber \\
&=&(n+1)!(\alpha+2)...(\alpha+n+2)
 \frac{\psi^{(n)}(0)}{n!}z^{n}\varphi(z)
 \end{eqnarray}
 for each $z \in \mathbb{D}$. Since $D_{\psi,\varphi,n}$ is hyponormal (or cohyponormal), (\ref{eI}) and (\ref{eII}) imply that $\varphi(z)=\varphi^{\prime}(0)z$. \par
 Conversely is obvious by \cite[Proposition 3.2]{mfh} (note that an analogue of \cite[Proposition 3.2]{mfh} holds in $H^{2}$ by the similar idea). \par
 Due to the hyponormality (or cohyponormality) of $D_{\psi,\varphi,n}$, invoking Theorem \ref{20theorem},  it follows that
 $$\big\|D_{\psi,\varphi,n}\big\|_{\alpha}=r_{\alpha}(D_{\psi,\varphi,n})=n!|a|\binom{\big\lfloor \frac{n}{1-|b|}\big\rfloor}{n}|b|^{\big\lfloor \frac{n}{1-|b|}\big\rfloor-n}\text{.}$$
\end{proof}

In the next theorem, we extend \cite[Theorem 2]{fh}.

\begin{thm}\label{7theorem}
If $\varphi(z)=bz$ for some $b \in \mathbb{D}\setminus \{0\}$, then
$$\big\|D_{\varphi,n}\big\|_{-1}=n!\binom{\big\lfloor \frac{n}{1-|b|}\big\rfloor}{n}|b|^{\big\lfloor \frac{n}{1-|b|}\big\rfloor-n}\text{.}$$
\end{thm}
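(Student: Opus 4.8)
The plan is to exploit the fact that on $H^{2}$ the operator $D_{\varphi,n}$ with $\varphi(z)=bz$ is diagonalized, after forming $D_{\varphi,n}^{\ast}D_{\varphi,n}$, by the monomial basis. Since $\|z^{k}\|_{-1}=1$ for every $k$, the family $\{z^{k}\}_{k\ge 0}$ is an orthonormal basis of $H^{2}$, and a direct computation gives
$$D_{\varphi,n}(z^{k})=\frac{k!}{(k-n)!}\,b^{\,k-n}z^{k-n}=n!\binom{k}{n}b^{\,k-n}z^{k-n}$$
for $k\ge n$, while $D_{\varphi,n}(z^{k})=0$ for $k<n$. Writing $c_{k}:=n!\binom{k}{n}b^{\,k-n}$, I would first record that $|c_{k}|=n!\binom{k}{n}|b|^{k-n}\to 0$ as $k\to\infty$ (polynomial growth against a geometric factor with $|b|<1$), so that $\sup_{k\ge n}|c_{k}|$ is finite and attained; in particular $D_{\varphi,n}$ is bounded.

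The key structural observation is that $D_{\varphi,n}$ is a weighted shift of step $n$: it sends the orthonormal vectors $z^{k}$ to pairwise orthogonal multiples of distinct monomials. Consequently $D_{\varphi,n}^{\ast}D_{\varphi,n}$ is diagonal in the monomial basis, with $D_{\varphi,n}^{\ast}D_{\varphi,n}(z^{k})=|c_{k}|^{2}z^{k}$ for $k\ge n$ and $0$ for $k<n$. From this I would conclude immediately that
$$\big\|D_{\varphi,n}\big\|_{-1}^{2}=\big\|D_{\varphi,n}^{\ast}D_{\varphi,n}\big\|_{-1}=\sup_{k\ge n}|c_{k}|^{2},$$
so that $\|D_{\varphi,n}\|_{-1}=n!\,\sup_{k\ge n}\binom{k}{n}|b|^{k-n}$. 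Alternatively, one may read off the matching lower bound $\|D_{\varphi,n}\|_{-1}\ge n!\binom{\lfloor n/(1-|b|)\rfloor}{n}|b|^{\lfloor n/(1-|b|)\rfloor-n}$ directly from Proposition \ref{5theorem}(i), taking $\psi\equiv 1$, $w=0$ and $\phi(z)=(-z)^{n}$, whence $|\phi^{(n)}(0)|=n!$.

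It remains to evaluate the supremum, and here I would invoke verbatim the extremal analysis carried out in the proof of Theorem \ref{20theorem}, with $\varphi^{\prime}(w)=b$ and $w=0$. That argument shows that the map $l\mapsto \binom{l}{n}|b|^{l-n}$ on the integers $l\ge n$ is maximized at $l=\big\lfloor \frac{n}{1-|b|}\big\rfloor$, since the associated function $h(x)=x(x-1)\cdots(x-n+1)|b|^{x-n}$ has a single interior maximum on $[n,\infty)$ governed by the monotonicity condition $l\le n/(1-|b|)$. Substituting this optimal index yields
$$\big\|D_{\varphi,n}\big\|_{-1}=n!\binom{\big\lfloor \frac{n}{1-|b|}\big\rfloor}{n}|b|^{\big\lfloor \frac{n}{1-|b|}\big\rfloor-n},$$
which is the claimed formula. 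The only genuinely delicate point is the passage from the continuous maximization to the correct integer maximizer; everything else is the clean diagonal structure of $D_{\varphi,n}$ on the monomial basis, which reduces the norm computation to precisely the supremum already resolved in Theorem \ref{20theorem}.
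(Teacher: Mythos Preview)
Your argument is correct. The direct observation that $D_{\varphi,n}$ maps the orthonormal basis $\{z^{k}\}$ to mutually orthogonal vectors, so that $D_{\varphi,n}^{\ast}D_{\varphi,n}$ is diagonal with eigenvalues $|c_{k}|^{2}$, cleanly reduces the norm to $\sup_{k\ge n}n!\binom{k}{n}|b|^{k-n}$, and the maximization is indeed exactly the computation carried out in the proof of Theorem~\ref{20theorem}.

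The paper, however, takes a different route. It first composes with the isometry $T_{z^{n}}$ on $H^{2}$ to obtain $\big\|D_{\varphi,n}\big\|_{-1}=\big\|D_{z^{n},\varphi,n}\big\|_{-1}$, and then invokes Proposition~\ref{6proposition}: with $\psi(z)=z^{n}$ (so $a=1$) and $\varphi(z)=bz$, that proposition identifies $D_{z^{n},\varphi,n}$ as a (co)hyponormal compact operator whose norm equals its spectral radius, the latter being supplied by Theorem~\ref{20theorem}. Both approaches ultimately feed into the same discrete optimization, but yours is more elementary and self-contained---it bypasses the hyponormality characterization and the isometry trick entirely, while the paper's argument illustrates how Theorem~\ref{7theorem} fits into the general framework of Proposition~\ref{6proposition}.
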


\begin{proof}
The result follows immediately from Proposition \ref{6proposition} and the fact that $T_{z^{n}}$ is an isometry on $H^{2}$.
\end{proof}

Let $\|\varphi\|_{\infty}\leq b<1$ and $\psi \in H^{\infty}$. We define $\varphi_{b}=(1/b)\varphi$ and $\rho(z)=bz$ (\cite[p. 2898]{fh}).
Observe that $D_{\varphi,n}=C_{\varphi_{b}}D_{\rho,n}$. Since $\big\|D_{\psi,\varphi,n}\big\|_{-1}\leq \|\psi\|_{\infty}\|C_{\varphi_{b}}\|_{-1}\|D_{\rho,n}\|_{-1}$, we can estimate the upper bound for $\big\|D_{\psi,\varphi,n}\big\|_{-1}$ by the same idea  as stated for the proof of  \cite[Proposition 4]{fh}.

\begin{prop}\label{8prop}
If $\varphi$ is a nonconstant analytic self-map of $\mathbb{D}$ with  $\|\varphi\|_{\infty}<1$ and the function $\psi$ belongs to $H^{\infty}$, then
$$\big\|D_{\psi,\varphi,n}\big\|_{-1}\leq n!\|\psi\|_{\infty}\sqrt{\frac{b+|\varphi(0)|}{b-|\varphi(0)|}}\binom{\big\lfloor \frac{n}{1-|b|}\big\rfloor}{n}|b|^{\big\lfloor \frac{n}{1-|b|}\big\rfloor-n}$$
whenever $\|\varphi\|_{\infty}\leq b<1$. In particular, $\big\|D_{\varphi,n}\big\|_{-1}=n!$ whenever both $\|\varphi\|_{\infty}\leq \frac{1}{n+1}$ and $\varphi(0)=0$.
\end{prop}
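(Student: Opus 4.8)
The plan is to run the factorization already recorded just before the statement: writing $M_{\psi}$ for multiplication by $\psi$, $\varphi_{b}=(1/b)\varphi$ and $\rho(z)=bz$, we have $D_{\psi,\varphi,n}=M_{\psi}C_{\varphi_{b}}D_{\rho,n}$, which yields the submultiplicative bound $\big\|D_{\psi,\varphi,n}\big\|_{-1}\le\|\psi\|_{\infty}\,\|C_{\varphi_{b}}\|_{-1}\,\|D_{\rho,n}\|_{-1}$. First I would check that $\varphi_{b}$ is a genuine self-map of $\mathbb{D}$: since $\varphi$ is nonconstant with $\|\varphi\|_{\infty}\le b$, the maximum modulus principle forces $|\varphi(z)|<b$ on $\mathbb{D}$, so $|\varphi_{b}(z)|<1$ and in particular $|\varphi(0)|<b$. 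Then each factor is estimated separately. The composition norm is controlled by the Hardy-space instance ($\alpha=-1$, so the exponent $(\alpha+2)/2$ equals $1/2$) of the bound quoted in the Preliminaries, using $|\varphi_{b}(0)|=|\varphi(0)|/b$, giving
$$\|C_{\varphi_{b}}\|_{-1}\le\left(\frac{1+|\varphi(0)|/b}{1-|\varphi(0)|/b}\right)^{1/2}=\sqrt{\frac{b+|\varphi(0)|}{b-|\varphi(0)|}},$$
where the denominator is positive precisely because $|\varphi(0)|<b$. The factor $\|D_{\rho,n}\|_{-1}$ is read off directly from Theorem \ref{7theorem} with $\rho(z)=bz$. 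Multiplying the three estimates produces exactly the claimed inequality.

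For the ``in particular'' assertion I would set $\psi\equiv1$ and use $\varphi(0)=0$, which makes the square-root factor equal to $1$. Choosing the admissible value $b=1/(n+1)$ gives $n/(1-b)=n+1$, hence $\lfloor n/(1-b)\rfloor=n+1$, $\binom{n+1}{n}=n+1$, and $b^{\,(n+1)-n}=1/(n+1)$, so the upper bound collapses to $n!\,(n+1)\cdot\frac{1}{n+1}=n!$. This establishes $\big\|D_{\varphi,n}\big\|_{-1}\le n!$.

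The matching lower bound comes from Proposition \ref{5theorem} with $w=0$ the fixed point. Since $\psi\equiv1$ has $\psi(w)\neq0$ we are in the third case of the weight $\phi$, namely $\phi(z)=(-z)^{n}$, so $|\phi^{(n)}(0)|=n!$. Because $\varphi(0)=0$, the Schwarz lemma gives $t:=|\varphi'(0)|\le\|\varphi\|_{\infty}\le 1/(n+1)$. If $t=0$, part (ii) already yields $\big\|D_{\varphi,n}\big\|_{-1}\ge n!$; if $t>0$, part (i) gives the lower bound $n!\binom{\lfloor n/(1-t)\rfloor}{n}t^{\lfloor n/(1-t)\rfloor-n}$, and since $n<n/(1-t)\le n+1$ the floor is either $n$ (when $t<1/(n+1)$, giving $n!\binom{n}{n}t^{0}=n!$) or $n+1$ (when $t=1/(n+1)$, giving $n!(n+1)t=n!$). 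In every case the lower bound equals $n!$, so together with the upper bound we conclude $\big\|D_{\varphi,n}\big\|_{-1}=n!$.

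The bulk of the argument is bookkeeping: the factorization estimate is routine once $\varphi_{b}$ is seen to be a self-map. I expect the only delicate point to be the ``in particular'' lower bound, where one must first pin down that $|\varphi'(0)|$ cannot exceed $1/(n+1)$ (via Schwarz) so that $\lfloor n/(1-|\varphi'(0)|)\rfloor$ is confined to $\{n,n+1\}$, and then verify that both of these values return the same numerical value $n!$. That case split on the floor function is where care is needed; everything else reduces to evaluating the formulas of Theorems \ref{7theorem} and the lower estimate of Proposition \ref{5theorem} at $b=1/(n+1)$.
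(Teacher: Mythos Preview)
Your argument is correct and, for the main inequality, identical to the paper's: the same factorization $D_{\psi,\varphi,n}=M_{\psi}C_{\varphi_{b}}D_{\rho,n}$, the same composition-norm bound, and Theorem~\ref{7theorem} for $\|D_{\rho,n}\|_{-1}$.

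For the ``in particular'' lower bound you invoke Proposition~\ref{5theorem} and then carry out a Schwarz-lemma case split on $\lfloor n/(1-|\varphi'(0)|)\rfloor$. This works, but it is more machinery than needed: the single test function $z^{n}$ already gives $D_{\varphi,n}(z^{n})=n!$ (a constant, independent of $\varphi$), so $\|D_{\varphi,n}\|_{-1}\ge \|n!\|_{-1}/\|z^{n}\|_{-1}=n!$ with no hypothesis on $\varphi'(0)$ at all. This is presumably what the paper has in mind (it is the $n$th-order analogue of the bound $\|D_{\varphi}\|_{-1}\ge 1$ from \cite[Proposition~4]{fh} cited in Example~\ref{1example}). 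Your route has the virtue of illustrating that Proposition~\ref{5theorem} recovers the sharp constant here, but the elementary test-function argument is both shorter and avoids the floor-function case analysis you flagged as the delicate point.
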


 \end{document}